\mathchardef\ordinarycolon\mathcode`\:
\def\R{{\Bbb R}}
\newtheorem*{mthm*}{Main Theorem}
\newtheorem{thm}{Theorem}
\newtheorem{prop}[thm]{Proposition}
\newtheorem{Def}[thm]{Definition}
\newtheorem{rem}[thm]{Remark}
\newtheorem{Cor}[thm]{Corollary}
\newtheorem{example}{Example}
\newcommand{\D}{\mathcal{D}}
\newcommand{\B}{\mathcal{B}}
\newcommand{\aut}{\emph{Aut}}
\newcommand{\Out}{\mbox{Out}}
\begin{document}

\title[Designs in Affine Spaces]{Block-Transitive Designs in Affine Spaces}

\author{Michael Huber}

\subjclass[2000]{Primary 51E10; Secondary 05B05, 20B25}

\keywords{Combinatorial design, finite affine space, block-transitive group of automorphisms, triply transitive permutation group, doubly homogeneous permutation group}

\date{June 24, 2009; and in revised form September 30, 2009}

\thanks{To Spyros Magliveras on the occasion of his 70th birthday.}

\maketitle
\vspace*{-0.5cm}

\begin{center}
{\footnotesize Wilhelm Schickard Institute for Computer Science\\
University of Tuebingen\\
Sand~13, D-72076 Tuebingen, Germany\\
E-mail: {\texttt{michael.huber@uni-tuebingen.de}}}
\end{center}

\smallskip

\begin{abstract}
This paper deals with block-transitive \mbox{$t$-$(v,k,\lambda)$} designs in affine spaces for large $t$, with a focus on the important index $\lambda=1$ case. We prove that there are no non-trivial \mbox{$5$-$(v,k,1)$} designs admitting a block-transitive group of automorphisms that is of affine type. Moreover, we show that the corresponding non-existence result holds for \mbox{$4$-$(v,k,1)$} designs, except possibly when the group is one-dimensional affine. Our approach involves a consideration of the finite $2$-homogeneous affine permutation groups.
\end{abstract}

\smallskip


\section{Introduction}\label{intro}

The construction and characterization of block-transitive \mbox{$t$-$(v,k,\lambda)$} designs in affine spaces is an interesting and beautiful topic of research. The situation when $t=2$, in particular for the index $\lambda=1$ case, has been studied in greater detail over the last decades. However,
less is known when $t\geq 3$. Obvious natural examples exist for $t=3$ and arbitrary $\lambda$, by using point \mbox{$3$-transitive} affine groups over the field $GF(2)$ as groups of automorphisms. For general \mbox{$t$-designs}, it has been shown that block-transitivity implies point \mbox{$2$-homogeneity} (and hence point-primitivity) when $t\geq 4$, while for $t<4$ an infinite number of counter-examples demonstrate that block-transitivity does not necessarily imply point-primitivity (see~Proposition~\ref{flag3hom}; and~\cite{DelDoy1989} for the case $t<4$).

Alltop~\cite{Alltop1971} constructed in 1971 the first explicit example of a block-transitive \mbox{$5$-design} in affine space, having $v=256$ points and block-size $k=24$. He showed that an orbit of a \mbox{$3$-transitive} affine group over $GF(2)$ on the \mbox{$k$-subsets} of the underlying vector space is a \mbox{$5$-design} whenever it is a \mbox{$4$-design}, and derived a necessary and sufficient condition for this to take place.
Alltop's construction has been extended by Cameron \& Praeger~\cite{CamPrae1993}, yielding a flag-transitive \mbox{$5$-$(2^8,24,\lambda)$} design with $\lambda=2^{21}{.}3^2{.}5^2{.}7{.}31$. Moreover, Cameron and  Praeger proved the non-existence of block-transitive \mbox{$t$-designs} for $t > 7$.

In this paper, we focus on block-transitive $4$- and \mbox{$5$-designs} in affine spaces for the important index $\lambda=1$ case. We will generalize several arguments developed in our earlier work on flag-transitive Steiner designs~(\cite{Hu2001,Hu2005,Hu2007,Hu2007a}, and~\cite{Hu2008} for a monograph) to the weaker condition of block-transitivity. Our approach involves a consideration of the finite \mbox{$2$-homogeneous} affine permutation groups. We remark that in~\cite{Hu_cam2009,Hu_block2009}, we already showed in particular that no block-transitive Steiner \mbox{$6$-design} or \mbox{$7$-design} admitting a \mbox{$3$-transitive} affine group over $GF(2)$ as a group of automorphisms can exist.

\smallskip

We prove the following main result:

\begin{mthm*}\label{mainthm}
There is no non-trivial Steiner \mbox{$5$-design} $\mathcal{D}$ admitting a block-transitive group \mbox{$G \leq \aut(\D)$}
of automorphisms that is of affine type. Moreover, there is no non-trivial Steiner \mbox{$4$-design} $\mathcal{D}$ admitting a
block-transitive group \mbox{$G \leq \aut(\D)$} of automorphisms that is of affine type, except possibly when $G \leq A \mathit{\Gamma} L(1,q)$.
\end{mthm*}

The paper is organized as follows. In Section~\ref{prelim}, we introduce the notation and preliminary results that are important for the remainder of the paper. A discussion on examples of block-transitive \mbox{$t$-designs} in affine spaces for $t \geq 3$ is followed by a proof of the non-existence of block-transitive Steiner $4$- and \mbox{$5$-designs} admitting a \mbox{$3$-transitive} affine group over $GF(2)$ as a group of automorphisms. In Section~\ref{des_afftype}, we investigate Steiner $4$- and \mbox{$5$-designs} with a group of affine type as a possibly block-transitive group of automorphisms. We may restrict here to finite \mbox{$2$-homogeneous} affine permutation groups. This investigation completes the proof of the Main Theorem.

\medskip


\section{Notation and Preliminaries}\label{prelim}

\begin{Def}\label{Stdes}\em
For positive integers $t \leq k \leq v$ and $\lambda$, a \mbox{\emph{$t$-$(v,k,\lambda)$ design}} $\mathcal{D}$
is a pair \mbox{$(X,\mathcal{B})$}, satisfying the following properties:

\begin{enumerate}
\item[(i)] $X$ is a set of $v$ elements, called \emph{points},

\smallskip

\item[(ii)] $\mathcal{B}$ is a family of \mbox{$k$-subsets} of $X$, called \emph{blocks},

\smallskip

\item[(iii)] every \mbox{$t$-subset} of $X$ is contained in exactly $\lambda$ blocks.

\end{enumerate}
\end{Def}

We denote points by lower-case and blocks by upper-case Latin
letters. Via convention, let $b:=\left| \mathcal{B} \right|$ denote the number of blocks.
A \emph{flag} of $\mathcal{D}$ is an incident point-block pair $(x,B)$, where $x \in X$ and $B \in \mathcal{B}$ with $x \in B$.
A \mbox{$t$-design} is called \emph{simple}, if the same \mbox{$k$-subset}
of points may not occur twice as a block. If not stated otherwise, we will restrict our attention to simple designs in this paper.
If $t<k<v$, then we speak of a \emph{non-trivial} \mbox{$t$-design}.
For historical reasons, a \mbox{$t$-$(v,k,\lambda)$ design} with index
$\lambda =1$ is called a \emph{Steiner \mbox{$t$-design}} (sometimes
also a \emph{Steiner system}). There are many infinite classes of
Steiner \mbox{$t$-designs} for $t=2$ and $3$, however for $t=4$ and
$5$ only a finite number are known. For a detailed treatment of
combinatorial designs, we refer
to~\cite{BJL1999,crc06,hall86,hupi85,stin04}. In
particular,~\cite{BJL1999,crc06} provide encyclopedic accounts of
key results and contain existence tables with known parameter sets.

In this paper, we are investigating \mbox{$t$-designs} which admit
groups of automorphisms with homogeneity properties
such as transitivity on blocks or flags. We consider
automorphisms of a \mbox{$t$-design} $\mathcal{D}$ as
permutations on $X$ which preserve $\mathcal{B}$, and
call a group \mbox{$G \leq \mbox{Aut} (\mathcal{D})$} of
automorphisms of $\mathcal{D}$ \emph{block-transitive} (respectively
\emph{flag-transitive}, \emph{point \mbox{$t$-transitive}},
\emph{point \mbox{$t$-homogeneous}}) if $G$ acts transitively on the
blocks (respectively transitively on the flags,
\mbox{$t$-transitively} on the points, \mbox{$t$-homogeneously} on
the points) of $\mathcal{D}$. For short, $\mathcal{D}$ is said to
be, e.g., block-transitive if $\mathcal{D}$ admits a
block-transitive group of automorphisms.

For \mbox{$\mathcal{D}=(X,\mathcal{B})$} a Steiner
\mbox{$t$-design} with \mbox{$G \leq \mbox{Aut} (\mathcal{D})$}, let
$G_x$ denote the stabilizer of a point $x \in X$, and $G_B$ the
setwise stabilizer of a block $B \in \mathcal{B}$. For $x, y \in X$,
we define $G_{xy}= G_x \cap G_y$.

For any $x \in \R$, let $\lfloor x \rfloor$ denote the greatest
positive integer which is at most $x$.

\smallskip

We state some basic combinatorial facts (see, for instance,~\cite{BJL1999}):

\begin{prop}\label{s-design}
Let $\mathcal{D}=(X,\mathcal{B})$ be a \mbox{$t$-$(v,k,\lambda)$}
design, and for a positive integer $s \leq t$, let $S \subseteq X$
with $\left|S\right|=s$. Then the total number of blocks incident
with each element of $S$ is given by
\[\lambda_s = \lambda \frac{{v-s \choose t-s}}{{k-s \choose t-s}}.\]
In particular, for $t\geq 2$, a \mbox{$t$-$(v,k,\lambda)$} design is
also an \mbox{$s$-$(v,k,\lambda_s)$} design.
\end{prop}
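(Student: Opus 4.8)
The plan is to establish the formula by a single direct double-counting argument and to read off the well-definedness of $\lambda_s$ (and hence the concluding ``in particular'' assertion) as a by-product. Fix an arbitrary $s$-subset $S \subseteq X$, and consider the set $\mathcal{F}$ of incident pairs $(T,B)$ in which $T$ is a $t$-subset of $X$ with $S \subseteq T$ and $B \in \mathcal{B}$ is a block with $T \subseteq B$. I would count $\left|\mathcal{F}\right|$ in two ways.

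First I would group the pairs according to their first coordinate $T$. The number of $t$-subsets $T$ with $S \subseteq T \subseteq X$ equals $\binom{v-s}{t-s}$, since such a $T$ arises by adjoining $t-s$ of the remaining $v-s$ points of $X$ to $S$; and by property (iii) in Definition~\ref{Stdes} each such $T$ is contained in exactly $\lambda$ blocks. This yields $\left|\mathcal{F}\right| = \lambda\binom{v-s}{t-s}$. Next I would group the pairs according to their second coordinate $B$: only blocks containing $S$ contribute, and for each such block the admissible $t$-subsets $T$ with $S \subseteq T \subseteq B$ number $\binom{k-s}{t-s}$, obtained by adjoining $t-s$ of the $k-s$ points of $B \setminus S$ to $S$. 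Writing $\lambda_s$ for the number of blocks through $S$, this gives $\left|\mathcal{F}\right| = \lambda_s\binom{k-s}{t-s}$. Equating the two expressions and noting that $\binom{k-s}{t-s} > 0$ (which holds since $s \leq t \leq k$ forces $k-s \geq t-s \geq 0$) yields the claimed formula upon division.

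The decisive point, and the only thing requiring a moment's care, is that the resulting value $\lambda\binom{v-s}{t-s}\big/\binom{k-s}{t-s}$ does not depend on the chosen set $S$. Hence every $s$-subset of $X$ lies in exactly this same constant number $\lambda_s$ of blocks, which is precisely the assertion that $\mathcal{D}$ is an $s$-$(v,k,\lambda_s)$ design; in particular, for $t \geq 2$ one obtains a design of every smaller strength $s \leq t$. There is no substantial obstacle here, since the two combinatorial counts are elementary and the division is legitimate. An equivalent route would be a downward induction on $s = t, t-1, \dots$ starting from $\lambda_t = \lambda$ and using the one-step relation $\lambda_s(k-s) = \lambda_{s+1}(v-s)$, obtained by double-counting the pairs $(p,B)$ with $p \in X \setminus S$ and $S \cup \{p\} \subseteq B$; but the single direct count above is the most economical and delivers both the closed form and the constancy of $\lambda_s$ simultaneously.
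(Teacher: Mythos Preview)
Your argument is correct: the double-count of pairs $(T,B)$ with $S \subseteq T \subseteq B$ is the standard proof, and you rightly observe that the resulting expression is independent of the particular $s$-subset $S$, which is what gives the ``in particular'' clause. The paper does not actually prove this proposition; it merely states it as a basic combinatorial fact with a reference to the literature (e.g.~\cite{BJL1999}), so there is no paper proof to compare against beyond noting that your argument is exactly the classical one found in such references.
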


It is customary to set $r:= \lambda_1$ denoting the total
number of blocks incident with a given point.

\begin{Cor}\label{Comb_t=5}
Let $\mathcal{D}$ be a \mbox{$t$-$(v,k,\lambda)$}
design. Then the following holds:
\begin{enumerate}

\item[{(a)}] $bk = vr.$

\smallskip

\item[{(b)}] $\displaystyle{{v \choose t} \lambda = b {k \choose t}}.$

\smallskip

\item[{(c)}] $r(k-1)=\lambda_2(v-1)$ for $t \geq 2$.

\end{enumerate}
\end{Cor}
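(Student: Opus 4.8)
The plan is to prove the three identities directly from Proposition~\ref{s-design}, which gives the block-count $\lambda_s$ for any $s$-subset. The key observation is that each of (a), (b), (c) is either a specialization of Proposition~\ref{s-design} or a one-line double-counting argument, so the work is purely arithmetic. I would treat the three parts in the order (b), then (a), then (c), since (b) follows most transparently from a global double count and sets up the pattern.

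For part (b), I would count in two ways the number of incident pairs $(T,B)$ where $T$ is a $t$-subset of $X$ and $B$ a block with $T\subseteq B$. Summing over $t$-subsets first, each of the $\binom{v}{t}$ such subsets lies in exactly $\lambda$ blocks by condition (iii) of Definition~\ref{Stdes}, giving $\binom{v}{t}\lambda$. Summing over blocks first, each of the $b$ blocks has exactly $\binom{k}{t}$ many $t$-subsets, giving $b\binom{k}{t}$. Equating yields $\binom{v}{t}\lambda=b\binom{k}{t}$. For part (a), I would instead count incident flags $(x,B)$ with $x\in B$: summing over points uses $r=\lambda_1$ from Proposition~\ref{s-design} (the number of blocks through each point), so there are $vr$ flags, while summing over blocks gives $bk$ since each block has $k$ points; hence $bk=vr$.

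For part (c), which requires $t\ge 2$, I would apply Proposition~\ref{s-design} to extract $\lambda_1$ and $\lambda_2$ explicitly and simplify. From the proposition, $r=\lambda_1=\lambda\binom{v-1}{t-1}/\binom{k-1}{t-1}$ and $\lambda_2=\lambda\binom{v-2}{t-2}/\binom{k-2}{t-2}$. Forming the ratio $r/\lambda_2$ and cancelling the common factor $\lambda$, the binomial coefficients telescope after writing $\binom{v-1}{t-1}=\frac{v-1}{t-1}\binom{v-2}{t-2}$ and $\binom{k-1}{t-1}=\frac{k-1}{t-1}\binom{k-2}{t-2}$, leaving $r/\lambda_2=(v-1)/(k-1)$, which rearranges to $r(k-1)=\lambda_2(v-1)$. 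Alternatively, the same identity follows by viewing the design as a $2$-$(v,k,\lambda_2)$ design (the ``in particular'' clause of Proposition~\ref{s-design}) and applying the standard count for $2$-designs.

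I do not expect any genuine obstacle here: all three parts are elementary consequences of Proposition~\ref{s-design} together with double counting. The only point demanding minor care is the binomial-coefficient cancellation in part (c), where I would make sure the telescoping is written cleanly rather than expanded into factorials, and confirm that the hypothesis $t\ge 2$ is exactly what guarantees $\lambda_2$ is defined.
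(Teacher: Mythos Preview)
Your argument is correct and is exactly what the paper has in mind: the corollary is stated there without proof as an immediate consequence of Proposition~\ref{s-design} (and standard double counting), so your derivation fills in precisely the routine details the paper omits.
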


\begin{Cor}\label{Comb_t=5_divCond}
Let $\mathcal{D}$ be a \mbox{$t$-$(v,k,\lambda)$}
design. Then
\[\lambda {v-s \choose t-s} \equiv \, 0\; \emph{\bigg(mod}\;\, {k-s \choose t-s}\bigg)\]
for each positive integer $s \leq t$.
\end{Cor}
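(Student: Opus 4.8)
The statement to prove is Corollary~\ref{Comb_t=5_divCond}, the divisibility condition. The plan is to derive it directly from Proposition~\ref{s-design}, which is the substantial fact already in hand, and treat this corollary as an essentially formal consequence.

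First I would observe that for each positive integer $s \leq t$, Proposition~\ref{s-design} asserts that the quantity $\lambda_s = \lambda \binom{v-s}{t-s} / \binom{k-s}{t-s}$ counts the number of blocks incident with each element of a fixed $s$-subset $S \subseteq X$. The key point is that $\lambda_s$ is, by its very definition as a cardinality of a set of blocks, a non-negative integer. Hence $\binom{k-s}{t-s}$ must divide $\lambda \binom{v-s}{t-s}$, which is precisely the congruence
\[\lambda \binom{v-s}{t-s} \equiv 0 \pmod{\binom{k-s}{t-s}}.\]
This is the entire content of the argument: integrality of a counting function forces a divisibility relation among the binomial coefficients and $\lambda$.

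Second, I would add a remark on the edge cases so the statement is unambiguous: when $s = t$ the relation reads $\lambda \binom{v-t}{0} = \lambda \equiv 0 \pmod{\binom{k-t}{0}} = 1$, which holds trivially; and the constraint $t \leq k \leq v$ from Definition~\ref{Stdes} guarantees $\binom{k-s}{t-s} \geq 1$, so the modulus is meaningful. One could also note that applying the divisibility for consecutive values of $s$ and forming suitable quotients recovers the standard chain of necessary conditions for the existence of a $t$-design, but that is not needed for the bare statement.

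I do not expect a genuine obstacle here, since the result is a one-line deduction from the preceding proposition; the only thing to be careful about is making explicit \emph{why} $\lambda_s$ is an integer (it enumerates blocks through $S$, and $\mathcal{D}$ being a design of strength $t \geq s$ guarantees this number is well-defined and independent of $S$), rather than leaving it as folklore. If anything, the "hard part" is purely expository: ensuring the reader sees that Proposition~\ref{s-design} already does all the combinatorial work and the corollary merely reinterprets the formula for $\lambda_s$ as a congruence.
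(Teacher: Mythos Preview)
Your proposal is correct and is exactly the intended derivation: the corollary is an immediate consequence of Proposition~\ref{s-design}, obtained by observing that the quantity $\lambda_s$ is an integer because it counts blocks. The paper in fact gives no proof of this corollary at all (it is listed among the ``basic combinatorial facts'' cited from the literature), so your argument supplies precisely the one-line justification that the paper omits.
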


\begin{prop}{\em (\cite{Cam1976,Tits1964}).}\label{Cam}
If $\D$ is a non-trivial Steiner \mbox{$t$-design}, then
the following holds:
\begin{enumerate}

\item[(a)] $v\geq (t+1)(k-t+1).$

\smallskip

\item[(b)] $v-t+1 \geq (k-t+2)(k-t+1)$ for $t>2$. If equality
holds, then
\smallskip
$(t,k,v)=(3,4,8),(3,6,22),(3,12,112),(4,7,23)$, or $(5,8,24)$.
\end{enumerate}
\end{prop}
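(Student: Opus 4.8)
The plan is to treat the two inequalities by separate devices and then to settle the equality case of (b) by a divisibility argument.

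For (a), I would argue directly, by exhibiting inside $X$ a block together with a bundle of blocks through one external point whose combined point set already has size $(t+1)(k-t+1)$. Fix a block $B_0$ and, using $v>k$ (which holds as $\D$ is non-trivial), a point $y\notin B_0$. Choose a $t$-subset $T=\{x_1,\dots,x_t\}\subseteq B_0$. For each $i$ the $t$-set $(T\setminus\{x_i\})\cup\{y\}$ lies in a unique block $B_i$; since $y\notin B_0$ we have $B_i\neq B_0$, and because two distinct blocks of a Steiner $t$-design meet in at most $t-1$ points, $B_i\cap B_0=T\setminus\{x_i\}$. The blocks $B_1,\dots,B_t$ are pairwise distinct, as they meet $B_0$ in distinct sets. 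The key step is to compute, for $i\neq j$, that $B_i\cap B_j=\{y\}\cup(T\setminus\{x_i,x_j\})$: the right-hand side has exactly $t-1$ elements and lies in $B_i\cap B_j$, so the maximality of block intersections forces equality. Writing $D_i:=B_i\setminus B_0$ (a set of $k-t+1$ points outside $B_0$), this gives $D_i\cap D_j=\{y\}$, so the sets $D_i\setminus\{y\}$ are pairwise disjoint of size $k-t$. Counting the points of $B_0\cup B_1\cup\cdots\cup B_t$ then yields
\[
v\ \geq\ k+\bigl(1+t(k-t)\bigr)\ =\ (t+1)(k-t+1),
\]
which is (a).

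For the inequality in (b) I would pass to a derived design. Deriving $\D$ at $t-2$ points produces (by the standard iterated derived-design construction) a linear space $\D'$ with parameters $2$-$(v',k',1)$, where $k'=k-t+2$ and $v'=v-t+2$. Fisher's inequality gives $b'\geq v'$ for this $2$-design; since $b'=v'(v'-1)/(k'(k'-1))$, this is precisely $v'\geq k'^2-k'+1$, and rearranging gives $v-t+1\geq(k-t+2)(k-t+1)$.

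The delicate part, which I expect to be the main obstacle, is the equality case. Equality forces $b'=v'$, i.e.\ $\D'$ is a symmetric $2$-$(v',k',1)$ design and hence a projective plane of order $n:=k'-1=k-t+1\geq 2$; thus $v-t+2=n^2+n+1$ and $k=n+t-1$, so $(k,v)$ is determined by $(t,n)$. To bound $n$, I would derive $\D$ down to a Steiner $3$-design with parameters $3$-$(n^2+n+2,\,n+2,\,1)$; its number of blocks equals $(n^2+n+2)(n^2+n+1)/(n+2)$, and reducing the numerator modulo $n+2$ shows $(n+2)\mid 12$, whence $n\in\{2,4,10\}$. It then remains to decide which $t$ survive for each such $n$: applying the integrality conditions of Corollary~\ref{Comb_t=5_divCond} to the full design, with parameters $t$-$(n^2+n+t-1,\,n+t-1,\,1)$, caps $t$ in each case (one finds $t=3$ for $n=2$ and for $n=10$, and $t\in\{3,4,5\}$ for $n=4$), leaving exactly the five parameter sets $(3,4,8)$, $(3,6,22)$, $(3,12,112)$, $(4,7,23)$, $(5,8,24)$. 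The finite divisibility elimination in this last step, together with verifying the mod-$(n+2)$ computation, is where the real care is needed.
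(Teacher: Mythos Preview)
The paper does not supply a proof of this proposition at all: it is quoted as a known result with references to Cameron and Tits, and is used only as a tool in the subsequent case analysis. So there is nothing in the paper to compare your argument against line by line.

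That said, your proposal is essentially the classical route. Your proof of (a) is Tits' configuration argument; the only point worth tightening is the justification that $B_i\cap B_0=T\setminus\{x_i\}$ exactly (you use it implicitly when computing $|D_i|=k-t+1$), but this follows as you noted since $|B_i\cap B_0|\le t-1$ while $T\setminus\{x_i\}\subseteq B_i\cap B_0$. For (b), deriving $t-2$ times and applying Fisher's inequality to the resulting linear space is the standard proof of the inequality, and your treatment of the equality case---identifying the derived design with a projective plane of order $n=k-t+1$, then using the integrality of the block count of the $3$-$(n^2+n+2,n+2,1)$ derived design to force $(n+2)\mid 12$, and finally eliminating the remaining $(n,t)$ pairs by the divisibility conditions of Corollary~\ref{Comb_t=5_divCond}---is Cameron's argument. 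The computations you sketched check out (e.g.\ $n^2+n+2\equiv 4$ and $n^2+n+1\equiv 3 \pmod{n+2}$, and the candidates $(n,t)=(2,4),(4,6),(10,4)$ all fail integrality of $b$), so the proof is correct as outlined.
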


As we are in particular interested in the cases when $t=4$ or $5$,
we obtain from~(b) the following upper bound for the positive integer $k$.

\begin{Cor}\label{Cameron_t=5}
Let $\D$ be a non-trivial Steiner \mbox{$t$-design} with
\mbox{$t=4+i$}, where \mbox{$i=0,1$}. Then
\[k \leq \Bigl\lfloor \sqrt{v-\textstyle{\big(\frac{11}{4}}+i\big)} + \textstyle{\frac{5}{2}+i} \Bigr\rfloor.\]
\end{Cor}

\begin{rem}\label{equa_t=5}
\emph{If \mbox{$G \leq \mbox{Aut}(\mathcal{D})$} acts
block-transitively on any non-trivial Steiner \mbox{$t$-design} $\mathcal{D}$
with $t\geq 2$, then $G$ acts point transitively on $\mathcal{D}$ by a result of Block~\cite[Thm.\,2]{Block1965}. In view of Corollary~\ref{Comb_t=5}~(b), this gives the equation
\[b=\frac{{v \choose t}}{{k \choose
t}}=\frac{v\left|G_{x}\right|}{\left| G_B \right|},\] where
$x$ is a point in $X$ and $B$ a block in
$\mathcal{B}$.}
\end{rem}

We also state a generalization of Block's result, which is due to Cameron \& Praeger~\cite[Thm.\,2.1]{CamPrae1993}.

\begin{prop}{\em (Cameron \& Praeger,~1993).}\label{flag3hom}
Let $\mathcal{D}$ be a \mbox{$t$-$(v,k,\lambda)$} design with $t\geq 2$. Then, the following holds:

\begin{enumerate}

\item[{(a)}] If \mbox{$G \leq \emph{Aut}(\mathcal{D})$} acts block-transitively on $\mathcal{D}$,
then $G$ also acts point \linebreak \mbox{$\lfloor t/2
\rfloor$-homogeneously} on $\mathcal{D}$.

\smallskip

\item[{(b)}] If \mbox{$G \leq \emph{Aut}(\mathcal{D})$} acts flag-transitively on $\mathcal{D}$,
then $G$ also acts point \linebreak \mbox{$\lfloor (t+1)/2
\rfloor$-homogeneously} on $\mathcal{D}$.

\end{enumerate}
\end{prop}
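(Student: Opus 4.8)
The plan is to derive homogeneity on $s$-subsets from transitivity on blocks by a rank-plus-averaging argument, taking $s=\lfloor t/2\rfloor$ for part~(a), and then to obtain part~(b) in the remaining (odd $t$) case by contracting the design at a point. I assume throughout, as is standard for Block-type statements, that $\mathcal D$ is non-trivial, so $t<k<v$. Two standing facts will be used: a block-transitive $t$-design with $t\ge2$ is point-transitive (Block's theorem, cf.\ Remark~\ref{equa_t=5} and~\cite{Block1965}); and by Proposition~\ref{s-design}, $\mathcal D$ is an $s$-design with a constant replication number $\lambda_s$ for every $s\le t$, where $\lambda_i/\lambda_{i+1}=(v-i)/(k-i)>1$.

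\emph{Key lemma.} Fix $s$ with $2s\le t$ and let $W$ be the $\binom{v}{s}\times b$ inclusion matrix of $s$-subsets of $X$ versus blocks. Since any two $s$-subsets $S_1,S_2$ satisfy $|S_1\cup S_2|\le 2s\le t$, the matrix $WW^{T}$ has $(S_1,S_2)$-entry $\lambda_{|S_1\cup S_2|}$, so $WW^{T}=\sum_{j=0}^{s}\lambda_{2s-j}A_j$ lies in the Bose--Mesner algebra of the Johnson scheme $J(v,s)$. A short computation with the Johnson-scheme eigenvalues --- using $v>k$, and for larger $s$ the size bound $v\ge(t+1)(k-t+1)$ of Proposition~\ref{Cam} --- shows $WW^{T}$ is positive definite, so $W$ has full row rank $\binom{v}{s}$. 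This is precisely the generalized Fisher inequality of Ray-Chaudhuri and Wilson for $2s$-designs; see~\cite{BJL1999}.

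\emph{Part (a).} The matrix $W$, viewed as a linear map $\mathbb{Q}[\mathcal B]\to\mathbb{Q}\bigl[\binom{X}{s}\bigr]$, is $G$-equivariant, and full row rank means precisely that it is surjective. Averaging any preimage over $G$ shows the induced map $\mathbb{Q}[\mathcal B]^{G}\to\mathbb{Q}\bigl[\binom{X}{s}\bigr]^{G}$ is still surjective, so the number of $G$-orbits on $s$-subsets (which equals $\dim\mathbb{Q}[\binom{X}{s}]^{G}$) is at most the number of $G$-orbits on blocks (which equals $\dim\mathbb{Q}[\mathcal B]^{G}$). If $G$ is block-transitive the latter is $1$, whence $G$ is transitive, hence homogeneous, on $s$-subsets. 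Taking $s=\lfloor t/2\rfloor$ proves~(a).

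\emph{Part (b).} If $t$ is even then $\lfloor(t+1)/2\rfloor=\lfloor t/2\rfloor$ and flag-transitivity implies block-transitivity, so~(a) applies. If $t=2m+1$ is odd, fix $x\in X$ and pass to the contraction $\mathcal D_x=(X\setminus\{x\},\{B\setminus\{x\}:x\in B\in\mathcal B\})$, a non-trivial $2m$-design. Flag-transitivity of $G$ on $\mathcal D$ says exactly that $G$ is point-transitive and $G_x$ is transitive on the blocks of $\mathcal D$ through $x$ --- that is, $G_x$ is block-transitive on $\mathcal D_x$; by~(a), $G_x$ is $m$-homogeneous on $X\setminus\{x\}$. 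Since $G$ is transitive on $X$, the usual step (send a distinguished point of one $(m+1)$-subset onto one of the other by an element of $G$, then match the two residual $m$-subsets by an element of the point stabilizer) promotes this to $(m+1)$-homogeneity of $G$ on $X$, i.e.\ $\lfloor(t+1)/2\rfloor$-homogeneity. The single non-formal ingredient is the Key lemma: the non-singularity of $WW^{T}$ is where non-triviality of $\mathcal D$ (and behind it Proposition~\ref{Cam}) genuinely enters, and for general $s$ it rests on the Ray-Chaudhuri--Wilson inequality rather than on a one-line calculation.
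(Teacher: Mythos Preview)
The paper does not give its own proof of this proposition: it is quoted as Theorem~2.1 of Cameron and Praeger~\cite{CamPrae1993} and used as a black box. So there is nothing in the paper to compare against directly.

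That said, your argument is essentially the original Cameron--Praeger proof, and it is correct in outline. The heart of the matter is exactly the rank argument you give: the $s$-subset versus block incidence matrix $W$ has full row rank by Ray-Chaudhuri--Wilson (generalized Fisher), $W$ is $G$-equivariant, so surjectivity passes to $G$-fixed vectors and the orbit count on $s$-subsets is bounded by the orbit count on blocks. Part~(b) via contraction at a point and the ``add back the fixed point'' step to promote $m$-homogeneity of $G_x$ to $(m{+}1)$-homogeneity of $G$ is also the standard route and is carried out correctly.

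One small correction: in justifying positive definiteness of $WW^{T}$ you invoke Proposition~\ref{Cam}, but that proposition is stated only for Steiner designs ($\lambda=1$), whereas the present statement is for arbitrary $\lambda$. Fortunately you do not actually need it: the Ray-Chaudhuri--Wilson computation of the Johnson-scheme eigenvalues of $WW^{T}$ yields strict positivity for any $2s$-design with $v>k$, with no appeal to the Tits--Cameron bound. So simply drop that reference and cite Ray-Chaudhuri--Wilson (as you already do via~\cite{BJL1999}) for the full-rank assertion. Your explicit non-triviality hypothesis $t<k<v$ is the right thing to add; the statement is vacuous or false at the degenerate extremes, and the cited result carries the same implicit assumption.
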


\medskip


\section{Block-Transitive Designs and Triply Transitive Affine Linear Groups}\label{des_AGL}

In this section we discuss block-transitive designs which admit a $d$-dim- ensional affine group $G=AGL(d,2)$ in its triply transitive action on the $2^d$ points of the underlying vector space $V=V(d,2)$. As $G$ is \mbox{$3$-transitive}, clearly for every cardinality $k$, every orbit of $G$ on $k$-subsets of $V$ yields a \mbox{$3$-design}. Alltop~\cite{Alltop1971} showed that such an orbit is a \mbox{$5$-design} whenever it is a \mbox{$4$-design}, and derived a necessary and sufficient condition for this to take place. He constructed this way the first block-transitive $t$-design in affine space with $t>3$.

\begin{example}{\em (Alltop,~1971).}\label{alltop}
There exists a \mbox{$5$-$(2^8,24,\lambda)$} design $\mathcal{D}$ admitting a block-transitive group \mbox{$G \leq \emph{Aut}(\mathcal{D})$} with \mbox{$G=AGL(8,2)$} (where $\lambda=2^{21}{.}3^2{.}5^2{.}7{.}31$).
\end{example}

Alltop's construction has been extended by Cameron \& Praeger~\cite{CamPrae1993}, yielding a flag-transitive design.

\begin{example}{\em (Cameron \& Praeger,~1993).}\label{campraeg}
There exists a \mbox{$5$-$(2^8,24,\lambda)$} design $\mathcal{D}$ admitting a flag-transitive group \mbox{$G \leq \emph{Aut}(\mathcal{D})$} with \mbox{$G=AGL(8,2)$} (where $\lambda=2^{21}{.}3^2{.}5^2{.}7{.}31$).
\end{example}

\begin{rem}\em
Bierbrauer~\cite{Bierb2001} has constructed an infinite family of non-simple \mbox{$7$-designs} which are invariant under $AGL(d,2)$, but not block-transitive.
\end{rem}

Cameron \& Praeger~\cite{CamPrae1993} proved the non-existence of block-transitive \linebreak \mbox{$t$-designs} for $t > 7$.

\begin{thm}{\em (Cameron \& Praeger,~1993).}
Let $\mathcal{D}$ be a non-trivial \mbox{$t$-design}. If \mbox{$G \leq \emph{Aut}(\mathcal{D})$} acts
block-transitively on $\mathcal{D}$ then $t \leq 7$, while if \mbox{$G \leq \emph{Aut}(\mathcal{D})$}
acts flag-transitively on $\mathcal{D}$ then $t \leq 6$.
\end{thm}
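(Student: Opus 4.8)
The plan is to prove the Cameron--Praeger bounds $t\le 7$ (block-transitive) and $t\le 6$ (flag-transitive) by combining the homogeneity reduction of Proposition~\ref{flag3hom} with the classification of finite $2$-transitive permutation groups. Suppose $G\le \Aut(\mathcal{D})$ acts block-transitively on a non-trivial $t$-design with $t\ge 8$. By Proposition~\ref{flag3hom}(a), $G$ acts point $\lfloor t/2\rfloor$-homogeneously, hence at least $4$-homogeneously, on the $v$ points of $\mathcal{D}$. A $4$-homogeneous group of degree $v\ge 5$ is $4$-transitive (more generally, Kantor's theorem: for $m\ge 5$ an $m$-homogeneous group is $m$-transitive, and $4$-homogeneous but not $4$-transitive forces the exceptional degrees $11,12,33$ with groups lying between $\mathrm{P\Gamma L}(2,q)$-type actions); in any case $G$ is $4$-transitive. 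The finite $4$-transitive groups are known from the classification of finite simple groups: the symmetric and alternating groups, the Mathieu groups $M_{11},M_{12},M_{23},M_{24}$, and no others. So first I would reduce to this very short list, discarding $S_n$ and $A_n$ (which act on designs only trivially, since the only $S_n$- or $A_n$-invariant families of $k$-subsets with $k<n$ give $\mathcal{B}=\binom{X}{k}$, a complete design, which is trivial unless $t=k$).

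The heart of the matter is then a finite case analysis over the Mathieu groups. For each such $G$ (of degree $v\in\{11,12,23,24\}$), one runs through the orbits of $G$ on $k$-subsets of $X$ for the relevant range of $k$; each orbit $\mathcal{B}$ is automatically a $t$-design for $t$ as large as the point-transitivity degree allows, and the question is whether it can be a \emph{$t$-design with $t\ge 8$}. The key numerical tool is Remark~\ref{equa_t=5}'s counting identity $b=\binom{v}{t}/\binom{k}{t}=v|G_x|/|G_B|$, together with the divisibility conditions of Corollary~\ref{Comb_t=5_divCond}: $\binom{v-s}{t-s}\equiv 0 \pmod{\binom{k-s}{t-s}}$ for all $s\le t$, and the Cameron--Tits bound Proposition~\ref{Cam}, which for $t\ge 8$ and $v\le 24$ already crushes almost everything. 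Indeed, Proposition~\ref{Cam}(a) gives $v\ge (t+1)(k-t+1)$, so with $v\le 24$ and $t\ge 8$ one gets $k\le t+1$ with very little room; the cases $k=t$ (trivial) and $k=t+1$ must then be eliminated by checking that $\binom{v}{t}/\binom{t+1}{t}=\binom{v}{t}/(t+1)$ is not the length of a genuine orbit, or more cheaply by the $s=t-1$ divisibility condition $\binom{v-t+1}{1}\equiv 0\pmod{\binom{k-t+1}{1}}$, i.e. $(k-t+1)\mid(v-t+1)$, combined with $r(k-1)=\lambda_2(v-1)$ from Corollary~\ref{Comb_t=5}(c). The flag-transitive statement is handled identically but starting from Proposition~\ref{flag3hom}(b), which yields point $\lfloor(t+1)/2\rfloor$-homogeneity, so $t\ge 7$ already forces $4$-homogeneity and the same Mathieu-group analysis applies with the slightly sharper input that $G_x$ is transitive on the blocks through $x$.

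I expect the main obstacle to be the bookkeeping for the borderline degrees, namely $M_{24}$ on $24$ points: here $t=8$ is not immediately excluded by the crude inequality $v\ge(t+1)(k-t+1)$ when $k=t$, and one must genuinely argue that the only $M_{24}$-invariant $8$-designs on $24$ points arising as single orbits have block size $k>8$ (where they fail to be $8$-designs because the $8$-subset orbit structure of $M_{24}$ is not homogeneous enough), or else appeal to the known classification of the designs supported by $M_{24}$ (the Steiner system $S(5,8,24)$ and its relatives) to see that no block-transitive $8$-design exists. A clean way to finish is: since $G$ is $4$-transitive but, being Mathieu, \emph{not} $5$-transitive except for $M_{12}$ and $M_{24}$ (which are $5$-transitive but not $6$-transitive), $G$ cannot act $\lfloor t/2\rfloor$-homogeneously for $\lfloor t/2\rfloor\ge 6$; hence $t\le 11$ from homogeneity alone, and then Proposition~\ref{Cam}(b) together with the divisibility conditions rules out each remaining value $t\in\{8,9,10,11\}$ for every admissible $(v,k)$ with $v\le 24$. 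The analogous chain for flags gives $\lfloor(t+1)/2\rfloor\le 5$, i.e. $t\le 9$, and the same finite check removes $t\in\{7,8,9\}$.
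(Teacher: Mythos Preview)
The paper does not supply a proof of this theorem; it is quoted as a result of Cameron and Praeger (1993) and used as background, so there is no in-paper argument to compare your attempt against.

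For what it is worth, your outline tracks the original Cameron--Praeger strategy closely: feed Proposition~\ref{flag3hom} back in to force $\lfloor t/2\rfloor$- (resp.\ $\lfloor (t+1)/2\rfloor$-) homogeneity, invoke Livingstone--Wagner and Kantor to pass to $4$-transitivity, use the CFSG list of $4$-transitive groups, discard $S_n$ and $A_n$ as yielding only complete designs, and finish by a finite elimination over the Mathieu degrees. Two points to tighten. First, your list of $4$-homogeneous but not $4$-transitive exceptions is off: Kantor's groups here are $PSL(2,8)$, $P\Gamma L(2,8)$ (degree $9$) and $P\Gamma L(2,32)$ (degree $33$), not degrees $11,12,33$; these small degrees must also be run through the divisibility sieve. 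Second, under the paper's stated definition of ``non-trivial'' (merely $t<k<v$), the complete design $\binom{X}{k}$ is non-trivial and admits $S_v$ block-transitively for any $t$, so your dismissal of $S_n,A_n$ relies on the stronger convention, standard in Cameron--Praeger, that complete designs count as trivial. Finally, the Mathieu endgame in your sketch is a plan rather than a proof: you still owe the explicit exclusion of $t\in\{8,\dots,11\}$ for $v\in\{11,12,23,24\}$ (and $t\in\{7,8,9\}$ in the flag case), which in the original is done by exactly the kind of divisibility and orbit-length checks you describe.
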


Considering the index $\lambda=1$ case, we have shown in~\cite{Hu_cam2009,Hu_block2009} in particular that there exists no non-trivial Steiner \mbox{$6$-design} or \mbox{$7$-design} $\mathcal{D}$ admitting a block-transitive group \mbox{$G \leq \mbox{Aut}(\mathcal{D})$} with \mbox{$G=AGL(d,2)$}, $v=2^d, d\geq 3$.

\smallskip

In this section, we prove:

\begin{prop}\label{AGL(d,2)}
There is no non-trivial Steiner \mbox{$4$-design} or \mbox{$5$-design} $\mathcal{D}$ admitting a block-transitive group \mbox{$G \leq \emph{Aut}(\mathcal{D})$}, where \mbox{$G=AGL(d,2)$}, $v=2^d, d\geq 3$.
\end{prop}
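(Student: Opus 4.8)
The plan is to use the strong arithmetic constraints that flow from $G = AGL(d,2)$ acting block-transitively on a hypothetical non-trivial Steiner $t$-design with $t \in \{4,5\}$, and to derive a contradiction for every admissible $(d,k)$. First I would invoke Remark~\ref{equa_t=5}: block-transitivity on a Steiner $t$-design forces point-transitivity, so $b = \binom{v}{t}/\binom{k}{t} = v\,|G_x|/|G_B|$ with $v = 2^d$. Since $G = AGL(d,2)$ is $3$-transitive, the point stabilizer $G_x \cong GL(d,2)$ has order $\prod_{i=0}^{d-1}(2^d - 2^i)$, and thus $|G_B| = |G|/b = 2^d\,|GL(d,2)| \cdot \binom{k}{t}/\binom{v}{t}$ must be a positive integer dividing $|G|$. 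This is the engine: the integrality of $b$ and of $|G_B|$, together with the bound on $k$ from Corollary~\ref{Cameron_t=5} (namely $k \le \lfloor \sqrt{v - 11/4} + 5/2\rfloor$ for $t=4$ and $k \le \lfloor \sqrt{v - 15/4} + 7/2\rfloor$ for $t=5$), severely restricts the possibilities.

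Next I would exploit the divisibility conditions of Corollary~\ref{Comb_t=5_divCond}: for each $s$ with $1 \le s \le t$, $\binom{v-s}{t-s} \equiv 0 \pmod{\binom{k-s}{t-s}}$. Taking $s = t-1$ gives $k-1 \mid v-1 = 2^d - 1$, so $k-1$ is odd and divides $2^d - 1$; in particular $k$ is even. Taking $s = t-2$ gives $\binom{k-t+2}{2} \mid \binom{v-t+2}{2}$, i.e. $(k-t+1)(k-t+2) \mid (v-t+1)(v-t+2)$. Combining these with Proposition~\ref{Cam}(b) — which for $t>2$ gives $v - t + 1 \ge (k-t+2)(k-t+1)$, with the listed equality cases $(4,7,23)$ and $(5,8,24)$ being the only extremal ones and neither having $v$ a power of $2$ — leaves a manageable finite-to-check band of $k$ values relative to each $v = 2^d$. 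For small $d$ one checks directly; for the general range, the parity constraint ($k$ even, $k - 1 \mid 2^d - 1$) forces $k-1$ into the (few) odd divisors of $2^d - 1$ that also satisfy $k \lesssim \sqrt{2^d}$, and then the integrality of $b = \binom{2^d}{t}/\binom{k}{t}$ — equivalently the requirement that $\binom{k}{t}$ divide $\binom{2^d}{t}$ for every $t' \le t$ via the chain of $\lambda_{s}$ — eliminates the survivors.

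The step I expect to be the main obstacle is handling the general (unbounded $d$) case cleanly rather than case-by-case: one cannot simply enumerate, so the argument must show that for all sufficiently large $d$ no pair $(d,k)$ survives. Here I would push the $p$-adic valuation arguments, most effectively at the prime $2$. Since $v = 2^d$ is highly $2$-divisible while $k$ is even but small, a Kummer/Legendre-type count of $v_2\!\left(\binom{2^d}{t}\right)$ versus $v_2\!\left(\binom{k}{t}\right)$ should show that the quotient $b$ is divisible by a large power of $2$, which then, via $|G_B| = |G|/b$ and $v_2(|G|) = d + \binom{d}{2} \cdot$ (a modest amount) $= d + d(d-1)/2$, forces $|G_B|$ to be too $2$-small to contain a block stabilizer acting on a $k$-set inside $V(d,2)$ — or, more directly, contradicts $|G_B| \ge$ the order of any subgroup that can stabilize (and act suitably on) a $k$-subset. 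An alternative route for this obstacle, paralleling the Steiner $6$- and $7$-design arguments of \cite{Hu_cam2009,Hu_block2009}, is to bound $|G_B|$ below by noting $G_B$ has a $k$-point orbit so $|G_B| \ge$ something, while the divisibility forces $|G_B|$ small; reconciling the two closes the case. I would write the $t=5$ case first (where Proposition~\ref{Cam}(b) is tightest) and then observe the $t=4$ argument is strictly easier since a Steiner $5$-design is a Steiner $4$-design on its derived structure, or simply rerun the same divisibility bookkeeping with $t=4$.
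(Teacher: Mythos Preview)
Your approach is entirely arithmetic (divisibility of $\lambda_s$, the Cameron bound, and $2$-adic bookkeeping on $|G|$ and $b$), whereas the paper's proof is geometric and uniform in $d$: it fixes the small subspace $\mathcal{E}=\langle e_1,e_2\rangle$ (respectively $\langle e_1,e_2,e_3\rangle$), observes that the pointwise stabiliser of $\mathcal{E}$ in $SL(d,2)$ is transitive on $V\setminus\mathcal{E}$, and concludes that the unique block through the $4$-set $\{0,e_1,e_2,e_1+e_2\}$ (respectively the $5$-set $\{0,e_1,e_2,e_3,e_1+e_2\}$) is trapped inside $\mathcal{E}$, forcing $k=4$ (respectively every block to lie in a $3$-flat), which contradicts non-triviality (respectively the existence of a $5$-set spanning a $4$-flat). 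No case analysis in $d$ is needed.

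Your proposal, by contrast, has two concrete problems. First, the divisibility step is misstated: taking $s=t-1$ in Corollary~\ref{Comb_t=5_divCond} gives $(k-t+1)\mid(v-t+1)$, not $(k-1)\mid(v-1)$. For $t=4$ this yields $k-3\mid 2^d-3$ (so $k$ is even, as you want), but for $t=5$ it gives $k-4\mid 2^d-4$, which does \emph{not} force a parity restriction on $k$; your subsequent use of ``$k-1$ among the odd divisors of $2^d-1$'' therefore has no basis. Second, and more seriously, you yourself identify the unbounded-$d$ case as the main obstacle and do not close it: the $2$-adic sketch is not carried out (and note $v_2(|AGL(d,2)|)=d(d+1)/2$ grows quadratically, so a large power of $2$ dividing $b$ does not by itself make $|G_B|$ ``too $2$-small''), and the alternative lower bound ``$|G_B|\ge$ something'' is never specified. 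The number of odd divisors of $2^d-1$ below $\sqrt{2^d}$ is not uniformly bounded, so the promised finite check does not materialise without further ideas. The paper's stabiliser-of-a-subspace argument sidesteps all of this in a few lines.
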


\begin{proof}
As trivial designs are excluded, let $v=2^d > k > t$ for $t=4$ and $5$, respectively.
Furthermore, we may assume that always $d>3$ in view of Corollary~\ref{Cameron_t=5}.
First, we consider the case when $t=4$. Any three distinct points being non-collinear in
$AG(d,2)$, they generate an affine plane. Let $\mathcal{E}$ be the
\mbox{$2$-dimensional} vector subspace spanned by the first two basis vectors $e_1$ and $e_2$
of the vector space $V=V(d,2)$. Then the pointwise stabilizer of $\mathcal{E}$ in $SL(d,2)$ (and therefore also in $G$)
acts point-transitively on \mbox{$V \setminus \mathcal{E}$}.
If the unique block $B \in \B$ which is incident with the \mbox{$4$-subset}
\mbox{$\{0,e_1,e_2,e_1+e_2\}$} contains some point outside
$\mathcal{E}$, then $B$ contains all points of \mbox{$V \setminus \mathcal{E}$},
and so $k \geq 2^d=v$, which is impossible. Hence $B$ can be identified with $\mathcal{E}$,
and by the block-transitivity of $G$, each block must be an
affine plane. This implies that always $k=4$, a contradiction.

Now, let $t=5$. Any five distinct points being non-coplanar in
$AG(d,2)$, they generate an affine subspace of dimension at
least $3$. Let $\mathcal{E}$ be the
\mbox{$3$-dimensional} vector subspace spanned by the first three basis vectors $e_1, e_2, e_3$ of $V$.
Then the pointwise stabilizer of $\mathcal{E}$ in $SL(d,2)$ (and therefore also in $G$)
acts point-transitively on \mbox{$V \setminus \mathcal{E}$}.
If the unique block $B \in \B$ which is incident with the \mbox{$5$-subset}
\mbox{$\{0,e_1,e_2,e_3,e_1+e_2\}$} contains some point outside
$\mathcal{E}$, then $B$ contains all points of \mbox{$V \setminus \mathcal{E}$},
and so \mbox{$k \geq 2^d-3$}, a contradiction to
Corollary~\ref{Cameron_t=5}. The block-transitivity
of $G$ now implies that each block must be contained in a
\mbox{$3$-dimensional} affine subspace. This leads to a contradiction as any
five distinct points that generate a \mbox{$4$-dimensional} affine subspace must also be incident with a unique block by Definition~\ref{Stdes}.
\end{proof}

\medskip


\section{\mbox{Block-Transitive Designs and Further Groups of Affine Type}}\label{des_afftype}

We investigate in this section Steiner $4$- and \mbox{$5$-designs} with a group of affine type as a possibly block-transitive group of automorphisms.  We note that due to Proposition~\ref{flag3hom}, we may restrict ourselves to the finite \mbox{$2$-homogeneous} affine permutation groups.

Let $G$ be a group acting \mbox{$2$-homogeneously} on a finite set $X$ of $v \geq 3$ points. If $G$ is not \mbox{$2$-transitive} on $X$, then
\mbox{$G \leq A \mathit{\Gamma} L(1,q)$} with \mbox{$v=q \equiv 3$ (mod $4$)} by a result of Kantor~\cite{Kant1969}.
On the other hand, relying on the classification of the finite simple groups, all \mbox{$2$-transitive} groups on $X$ are known (cf.~\cite{CSK1976,Gor1982,Her1974,Her1985,Hup1957,Kant1985,Lieb1987,Mail1895}). By a classical result of Burnside, they split into two types of groups. In the context of our consideration, we will deal with the groups of affine type: A finite \mbox{$2$-transitive} permutation group on $X$ is called of \emph{affine type}, if $G$ contains a regular normal subgroup $T$ which is elementary Abelian of order $v=p^d$, where $p$ is a prime.
\mbox{If $a$ divides $d$,} and if we identify $G$ with a group of affine transformations
\[x \mapsto x^g+u\]
of $V=V(d,p)$, where $g \in G_0$ and $u \in V$, then particularly one of the following occurs:

\begin{enumerate}

\smallskip

\item[(1)] $G \leq A \mathit{\Gamma} L(1,p^d)$

\smallskip

\item[(2)] $G_0 \unrhd SL(\frac{d}{a},p^a)$, $d \geq 2a$

\smallskip

\item[(3)] $G_0 \unrhd Sp(\frac{2d}{a},p^a)$, $d \geq 2a$

\smallskip

\item[(4)] $G_0 \unrhd G_2(2^a)'$, $d=6a$

\smallskip

\item[(5)] $G_0 \cong A_6$ or $A_7$, $v=2^4$

\smallskip

\item[(6)] $G_0 \unrhd SL(2,3)$ or $SL(2,5)$, $v=p^2$,
$p=5,7,11,19,23,29$ or $59$, or $v=3^4$

\smallskip

\item[(7)] $G_0$ contains a normal extraspecial subgroup $E$ of
order $2^5$, and $G_0/E$ is isomorphic to a subgroup of $S_5$,
$v=3^4$

\smallskip

\item[(8)] $G_0 \cong SL(2,13)$, $v=3^6,$
\end{enumerate}

\begin{prop}
There is no non-trivial Steiner \mbox{$5$-design} $\mathcal{D}$ admitting a block-transitive group \mbox{$G \leq \emph{Aut}(\mathcal{D})$}
with $G\leq A \mathit{\Gamma} L(1,p^d)$, $v=p^d$.
\end{prop}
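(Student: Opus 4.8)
The plan is to exploit the fact that a block-transitive group $G$ on a non-trivial Steiner \mbox{$5$-design} must act point \mbox{$2$-homogeneously} (Proposition~\ref{flag3hom}), and in the case at hand $G \leq A\mathit{\Gamma}L(1,q)$ with $q=p^d$, so $v=q$. First I would record the arithmetic constraints. Since $G \leq A\mathit{\Gamma}L(1,q)$, the one-point stabilizer $G_0$ has order dividing $(q-1)d$, hence $|G| \mid q(q-1)d$. Combining this with the block-counting equation from Remark~\ref{equa_t=5}, namely $b = \binom{v}{5}/\binom{k}{5} = v|G_x|/|G_B|$, we get that $\binom{k}{5}$ divides $|G_B| \cdot \binom{v}{5}/v$, and more usefully that $b$ divides $|G|/|G_B|$, so $b \mid (q-1)d$. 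Thus
\[
\frac{\binom{q}{5}}{\binom{k}{5}} \;\Big|\; (q-1)d.
\]
Together with the lower bound $\lambda_5 = 1$ forcing divisibility conditions (Corollary~\ref{Comb_t=5_divCond}) and the Cameron bound $k \leq \lfloor \sqrt{v - 11/4} + 5/2 \rfloor$ (Corollary~\ref{Cameron_t=5} with $i=1$), these should squeeze $k$ and $q$ into a very small range.

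The key estimate I expect to drive the contradiction is a comparison of sizes: $\binom{q}{5}/\binom{k}{5} \geq \big((q-4)/(k-4)\big)^5$ up to lower-order terms, while $k-4 \leq \sqrt{q}$ roughly, so the left side is at least of order $q^{5}/q^{2.5} = q^{2.5}$, whereas $(q-1)d$ is of order $q \log_p q$. For $q$ not too small this is already impossible. So the main body of the argument is: (i) derive $b \mid (q-1)d$ and the explicit inequality $\binom{q}{5} \leq (q-1)d\binom{k}{5}$; (ii) substitute the Cameron bound on $k$ to obtain a polynomial-versus-$(q\log q)$ inequality in $q$ alone; (iii) conclude that $q$ is bounded by some explicit small constant; (iv) handle the finitely many remaining pairs $(q,k)$ by hand, using the divisibility conditions of Corollary~\ref{Comb_t=5_divCond} (for $s=4$ one needs $\binom{k-4}{1} = k-4$ to divide $q-4$, for $s=3$ one needs $\binom{k-3}{2}$ to divide $(q-3)(q-4)$, etc.), and using that $q \equiv 3 \pmod 4$ when $G$ is not \mbox{$2$-transitive} — though for the non-existence statement one does not even need to separate those subcases. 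A subtlety to keep in mind is that one should also invoke Proposition~\ref{Cam}(a), $v \geq (t+1)(k-t+1) = 6(k-4)$, and part (b) with its short list of equality cases, to eliminate the sporadic candidate $(t,k,v)=(5,8,24)$ directly (here $v=24$ is not a prime power, so it does not even arise, but the bound is still useful for pruning).

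The hard part will be step (iv): making the finite check clean rather than a messy case-by-case scan. I would organize it by first using $k-4 \mid q-4$ to write $q = 4 + m(k-4)$ for a positive integer $m$, then feeding this into $b \mid (q-1)d$; since $d = \log_p q$ is small, one gets a bound on $m$ in terms of $k$, and then the remaining freedom is genuinely finite. The other potential obstacle is bounding $d$ itself: a priori $q = p^d$ could have large $d$ with small $p$ (e.g. $p=3$), so the factor $d$ in $(q-1)d$ must be controlled; but since $d \leq \log_2 q$, the inequality in step (ii) still closes, just with a slightly larger explicit constant. Once $q$ is bounded, one checks the prime powers $q \equiv 3 \pmod 4$ (or all prime powers, to be safe) below the bound against the admissible $k$ from Corollary~\ref{Cameron_t=5}, and verifies in each case that either no integer $k$ satisfies all the divisibility conditions, or that the resulting $b$ fails to divide $(q-1)d$. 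This yields the claimed non-existence.
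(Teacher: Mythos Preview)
Your approach is essentially the paper's own: bound $|G|$ by $|A\mathit{\Gamma}L(1,q)|=q(q-1)d$, feed this into the block-count identity $b\,|G_B|=|G|$ from Remark~\ref{equa_t=5}, and then use the Cameron inequality to force a size contradiction. Two small slips to correct. First, from $b\,|G_B|=|G|\mid q(q-1)d$ you only get $b\,|G_B|\mid q(q-1)d$, not $b\mid (q-1)d$; the factor $q$ cannot be dropped without further argument. Second, the estimate $\binom{q}{5}/\binom{k}{5}\geq\bigl((q-4)/(k-4)\bigr)^{5}$ points the wrong way: since $q>k$, each ratio $(q-i)/(k-i)$ is increasing in $i$, so the product is at most that fifth power; the valid lower bound is $(q/k)^{5}$. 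Neither error is fatal to the strategy.

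The paper's execution is more compact than your plan: rather than a crude fifth-power estimate followed by a finite case check, it rewrites the divisibility as
\[
(v-2)(v-3)(v-4)\,|G_B|\ \big|\ k(k-1)(k-2)(k-3)(k-4)\,d,
\]
cancels one factor on each side using $v-4\geq(k-3)(k-4)$ from Proposition~\ref{Cam}(b) directly, and arrives at $(v-2)(v-3)|G_B|\leq k(k-1)(k-2)d$, which together with $d\leq\log_2 v$ and Corollary~\ref{Cameron_t=5} is declared impossible without an explicit residual scan. Your step~(iv) is therefore more laborious than necessary, though not incorrect in spirit.
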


\begin{proof}
Clearly, $\left| G \right| \big| \left|A \mathit{\Gamma} L(1,v)\right|=v(v-1)d$.
From Remark~\ref{equa_t=5} follows
\[(v-2)(v-3)(v-4) \left| G_{B} \right| \big| k(k-1)(k-2)(k-3)(k-4)d.\]
By Proposition~\ref{Cam}~(b), we have
\[v-4 \geq (k-3)(k-4).\]
Hence
\[(v-2)(v-3)\left|G_{B} \right| \leq k(k-1)(k-2)d.\]
However, as $d \leq \log_2v$, this is always impossible in view of Corollary~\ref{Cameron_t=5}.
\end{proof}

\begin{prop}\label{SL}
There is no non-trivial Steiner \mbox{$4$-design} or \mbox{$5$-design} $\mathcal{D}$ admitting a
block-transitive group \mbox{$G \leq \emph{Aut}(\mathcal{D})$} of affine type, where $G_0 \unrhd SL(\frac{d}{a},p^a)$, $d\geq 2a$.
\end{prop}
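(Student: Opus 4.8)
The plan is to mimic the counting strategy used in the preceding propositions: combine the block-transitivity equation from Remark~\ref{equa_t=5} with the numerical bound from Proposition~\ref{Cam}~(b) (equivalently Corollary~\ref{Cameron_t=5}), and play this off against the small number of prime powers $p^a$ for which the group in case~(2) can be doubly homogeneous in the relevant range. Write $v=p^d=q^n$ with $q=p^a$ and $n=d/a\geq 2$, so that $G_0\unrhd SL(n,q)$ and $|G|$ divides $|A\mathit{\Gamma}L(n,q)|$. Since the point stabilizer of $SL(n,q)$ acting on the nonzero vectors of $V(n,q)$ behaves like the stabilizer of a point on a projective space, the key structural input is that the two-point stabilizer $G_{xy}$ (for suitable $x,y$) is comparatively large; in fact one can bound $|G_B|$ from above using $b=v|G_x|/|G_B|$ together with the value of $|G_x|$ for the affine $SL$-groups.

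First I would treat the $t=5$ case. From Remark~\ref{equa_t=5} we have
\[
(v-2)(v-3)(v-4)\,\bigl|G_B\bigr| \;\big|\; k(k-1)(k-2)(k-3)(k-4)\,|G_x|\big/(v/v)
\]
— more precisely, using $b=\binom{v}{5}/\binom{k}{5}=v|G_x|/|G_B|$, one gets $(v-1)(v-2)(v-3)(v-4)|G_B|$ divides $k(k-1)(k-2)(k-3)(k-4)|G_x|$. Since $|G_x|$ divides $|GL(n,q)|\cdot a = q^{\binom{n}{2}}\prod_{i=1}^{n}(q^i-1)\cdot a$, the right-hand side is of controlled size, while by Proposition~\ref{Cam}~(b) we have $v-4\geq(k-3)(k-4)$, which forces $k$ to be of order roughly $\sqrt v$. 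Feeding the bound $k\leq\lfloor\sqrt{v-11/4}+5/2\rfloor$ from Corollary~\ref{Cameron_t=5} into the divisibility relation, the factor $(v-1)(v-2)(v-3)(v-4)$ on the left is of order $v^4$, whereas $k(k-1)(k-2)(k-3)(k-4)$ is only of order $v^{5/2}$; the discrepancy must be absorbed by $|G_x|\big/|G_B|$, and one shows this is impossible because $|G_x|$ is too small relative to $v^{3/2}|G_B|$ once $n$ and $q$ are constrained. The case $t=4$ is handled identically with one fewer factor on each side, using $v-3\geq(k-2)(k-3)$ from Proposition~\ref{Cam}~(b).

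The main obstacle I expect is that the crude ``powers of $v$'' estimate above is not by itself decisive — $|GL(n,q)|$ is itself roughly $v^n$, which for large $n$ dwarfs $v^4$, so one cannot argue purely by magnitude. The real work is therefore in exploiting that each block $B$ is an affine subspace or a highly structured set: arguing as in the proof of Proposition~\ref{AGL(d,2)}, the pointwise stabilizer in $SL(n,q)$ of a low-dimensional affine subspace $\mathcal E$ through a $t$-subset acts transitively on $V\setminus\mathcal E$, so either the block containing a generic $t$-set lies inside $\mathcal E$ (forcing $k$ small and hence, via Proposition~\ref{Cam}~(a), $v$ small, contradicting $n\geq2$ and the excluded trivial range), or the block swallows all of $V\setminus\mathcal E$ and $k\geq v-O(1)$, again impossible by Corollary~\ref{Cameron_t=5}. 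The residual finitely many small configurations — small $q$, small $n$, and the exceptional behaviour when $p=2$ and blocks could be unions of cosets — would be eliminated by the explicit divisibility conditions of Corollary~\ref{Comb_t=5_divCond} applied to the candidate parameter sets, together with the classification of Steiner systems with equality in Proposition~\ref{Cam}~(b) (only $(4,7,23)$ and $(5,8,24)$, whose automorphism groups are Mathieu groups, not of the affine $SL$-type). Assembling these three pieces — the generic magnitude estimate, the subspace-stabilizer transitivity argument, and the finite clean-up via divisibility — completes the proof.
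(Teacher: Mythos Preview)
Your third paragraph contains the right idea --- the subspace-stabilizer argument is exactly how the paper proceeds --- but the execution has two genuine gaps, and the first two paragraphs are, as you yourself note, a false start that contributes nothing.

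First, your claim that ``the pointwise stabilizer in $SL(n,q)$ of a low-dimensional affine subspace $\mathcal{E}$ through a $t$-subset acts transitively on $V\setminus\mathcal{E}$'' is false in the critical case $n=2$: if $\mathcal{E}=\langle e_1\rangle$ is a line, then the pointwise stabilizer in $SL(2,q)$ is the group of transvections with axis $\langle e_1\rangle$, whose orbits on $V\setminus\langle e_1\rangle$ are the $q-1$ affine lines parallel to $\langle e_1\rangle$, each of length $q=p^a$, not a single orbit. The paper handles $d=2a$ separately using precisely this weaker information: an orbit of length at least $p^a$ already forces $k\geq p^a+4$, which contradicts Corollary~\ref{Cameron_t=5} once $p^a>3$. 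The cases $p^a=2$ and $p^a=3$ must be disposed of independently (the former is Proposition~\ref{AGL(d,2)}; the latter is analogous with a plane in place of a line), because a line over $GF(2)$ or $GF(3)$ does not contain four points. You do not isolate any of these case distinctions.

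Second, your endgame is wrong. Once you know the block $B$ through a collinear $4$-subset lies inside the line $\langle e_1\rangle$, you write ``forcing $k$ small and hence, via Proposition~\ref{Cam}~(a), $v$ small''. But Proposition~\ref{Cam}~(a) is a \emph{lower} bound on $v$ in terms of $k$; it gives no upper bound on $v$ from $k\leq p^a$. The paper's conclusion is different and cleaner: block-transitivity of $G$ implies that \emph{every} block is contained in an affine line, and then any four non-collinear points (which exist since $n\geq 2$) cannot lie in a common block, contradicting the Steiner property. No appeal to Proposition~\ref{Cam}~(a), no finite clean-up via Corollary~\ref{Comb_t=5_divCond}, and no invocation of Mathieu groups is needed.
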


\begin{proof}
First, let $t=4$. The case $p^a=2$ has already been treated in Proposition~\ref{AGL(d,2)}.
For $p^a=3$, we may argue similarly. Hence, let us assume that $p^a >3$.
For $d=2a$, let $U=U(\text{\footnotesize{$\langle$}} e_1
\text{\footnotesize{$\rangle$}}) \leq G_0$ denote the subgroup of
all transvections with axis $\text{\footnotesize{$\langle$}} e_1
\text{\footnotesize{$\rangle$}}$. Obviously, $U$ fixes as points only the elements of
$\text{\footnotesize{$\langle$}} e_1 \text{\footnotesize{$\rangle$}}$. Thus, $G_0$ has point-orbits of
length at least $p^a$ outside $\text{\footnotesize{$\langle$}} e_1
\text{\footnotesize{$\rangle$}}$. Let \mbox{$S=\{0,e_1,x,y\}$} be a
\mbox{$4$-subset} of distinct points with $x,y \in
\text{\footnotesize{$\langle$}} e_1
\text{\footnotesize{$\rangle$}}$. Clearly, $U$ fixes the unique
block $B \in \B$ which is incident with $S$. Therefore, if $B$
contains at least one point outside $\text{\footnotesize{$\langle$}}
e_1 \text{\footnotesize{$\rangle$}}$, then $k \geq p^a +4$, a contradiction in
view of Corollary~\ref{Cameron_t=5}. Hence, $B$ is completely contained in
$\text{\footnotesize{$\langle$}} e_1
\text{\footnotesize{$\rangle$}}$. As $G$ is block-transitive,
we may conclude that each block lies in an affine line. However, by
Definition~\ref{Stdes}, any four distinct
non-collinear points must also be incident with a unique block, a
contradiction. Thus, let us assume that $d \geq 3a$.  Then
$SL(\frac{d}{a},p^a)_{e_1}$ (and hence also $G_{0,e_1}$) acts
point-transitively on $V \setminus \text{\footnotesize{$\langle$}}
e_1 \text{\footnotesize{$\rangle$}}$. As above, let
\mbox{$S=\{0,e_1,x,y\}$} be a \mbox{$4$-subset} of distinct points
with $x,y \in \text{\footnotesize{$\langle$}} e_1
\text{\footnotesize{$\rangle$}}$. If the unique block $B \in \B$
which is incident with $S$ contains some point outside
$\text{\footnotesize{$\langle$}} e_1
\text{\footnotesize{$\rangle$}}$, then $B$ contains all
points outside, and thus $k \geq p^d-p^a+4$, contradicting Corollary~\ref{Cameron_t=5}.
We conclude that $B$ lies
completely in $\text{\footnotesize{$\langle$}} e_1
\text{\footnotesize{$\rangle$}}$, and we can proceed with the same argument as above.
For $t=5$, our methods may be applied mutatis mutandis.
\end{proof}

\begin{prop}
There is no non-trivial Steiner \mbox{$4$-design} $\mathcal{D}$ admitting a
block-transitive group \mbox{$G \leq \emph{Aut}(\mathcal{D})$} of affine type, where $G_0 \unrhd Sp(\frac{2d}{a},p^a)$, $d \geq 2a$.
\end{prop}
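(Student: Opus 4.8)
The plan is to adapt, to the symplectic setting, the argument that settled the $SL$-case in Proposition~\ref{SL}: produce a subgroup of $G_{0}$ that fixes a small affine subspace of $V$ pointwise while having large point-orbits on the complement, use it to force the block through a suitable $4$-subset of that subspace to remain inside it, and then contradict block-transitivity via Definition~\ref{Stdes}. Write $q=p^{a}$ and let $V=V(2n,q)$ be the natural symplectic module for $Sp(2n,q)\trianglelefteq G_{0}$, so that $v=q^{2n}$; by $d\geq 2a$ we may assume $2n\geq 4$, i.e.\ $n\geq 2$ (the degenerate case $2n=2$, where $Sp(2,q)=SL(2,q)$, being subsumed by Proposition~\ref{SL}). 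Fix a symplectic basis $e_{1},f_{1},\dots,e_{n},f_{n}$ of $V$, let $W=\langle e_{1},f_{1}\rangle$ be the hyperbolic plane it spans and $W^{\perp}=\langle e_{2},f_{2},\dots,e_{n},f_{n}\rangle$ its $(2n-2)$-dimensional non-degenerate complement, so that $V=W\perp W^{\perp}$. From this orthogonal decomposition one sees that the maps acting as the identity on $W$ and as an arbitrary element of $Sp(W^{\perp})$ on $W^{\perp}$ form a subgroup $U\cong Sp(2n-2,q)$ of $Sp(2n,q)\leq G_{0}$ that fixes every point of $W$; and since $Sp(2n-2,q)$ is transitive on the nonzero vectors of $W^{\perp}$, the $U$-orbit of any point $z=w+u$ with $w\in W$, $u\in W^{\perp}\setminus\{0\}$ is precisely $\{\,w+u'\;:\;u'\in W^{\perp}\setminus\{0\}\,\}$, a set of $q^{2n-2}-1$ points none of which lies in $W$.

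I would then take the $4$-subset $S=\{0,e_{1},f_{1},e_{1}+f_{1}\}\subseteq W$, which consists of four distinct points for every $q\geq 2$, and let $B\in\B$ be the unique block incident with $S$. Since $U$ fixes $S$ pointwise it stabilizes $B$, so if $B$ contained a point $z\in V\setminus W$ it would contain the whole $U$-orbit of $z$ described above, giving
\[
k\;\geq\;(q^{2n-2}-1)+4\;=\;q^{2n-2}+3 .
\]
But $v=q^{2n}$, so Corollary~\ref{Cameron_t=5} forces $k\leq\bigl\lfloor\sqrt{q^{2n}-\tfrac{11}{4}}+\tfrac{5}{2}\bigr\rfloor\leq q^{n}+2$, and since $n\geq 2$ we have $q^{2n-2}+3\geq q^{n}+3>q^{n}+2$, a contradiction. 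Hence $B\subseteq W$, so $B$ lies in a $2$-dimensional affine subspace of $V$. Because $G$ consists of affine transformations and is block-transitive, it follows that \emph{every} block of $\mathcal{D}$ is contained in a $2$-dimensional affine subspace; yet the $4$-subset $\{0,e_{1},f_{1},e_{2}\}$ spans a $3$-dimensional affine subspace (note $2n\geq 4$) and by Definition~\ref{Stdes} must be incident with a block. This contradiction proves the proposition.

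The only inputs that need checking are routine: that $U$ is exactly the pointwise stabilizer of $W$ in $Sp(2n,q)$ and induces the full group $Sp(2n-2,q)$ on $W^{\perp}$, and that the latter is transitive on $W^{\perp}\setminus\{0\}$, both being immediate from Witt's theorem and the splitting $V=W\perp W^{\perp}$. I expect the delicate point to be simply that the decisive inequality still holds in the tight boundary case $n=2$ (the groups with $Sp(4,q)\trianglelefteq G_{0}$), where it reads $q^{2}+3>q^{2}+2$ and so is satisfied by the slimmest of margins; it is worth noting that, in contrast to Proposition~\ref{SL}, no separate treatment of small $q$ is needed here, since the hyperbolic plane $W$ always contains $q^{2}\geq 4$ points, so the required $4$-subset $S$ is available for every $q$.
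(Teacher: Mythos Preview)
Your argument is correct. It is essentially the paper's treatment of the case $p^{a}=2$, but you have observed---correctly---that the hyperbolic-plane argument works uniformly for every $q=p^{a}$, since $|W|=q^{2}\geq 4$ always supplies the needed $4$-subset and the orbit bound $q^{2n-2}+3>q^{n}+2$ holds for all $n\geq 2$. The paper instead splits into two cases: for $p^{a}\neq 2$ it uses a \emph{line} $\langle x\rangle$ and the rank-$3$ structure of $PSp$ to bound the $G_{0,x}$-orbits on $V\setminus\langle x\rangle$ from below by $\sum_{i=1}^{2n-2}q^{i}>q^{n}$, whereas for $p^{a}=2$ (where a line has only two points and cannot host a $4$-subset) it switches to a hyperbolic plane and the subgroup $Sp(2n-2,2)$, exactly as you do. Your unified route is cleaner and sidesteps both the appeal to the rank-$3$ orbit lengths and the potential awkwardness at $p^{a}=3$ (where $|\langle x\rangle|=3$); the trade-off is only that the paper's line argument, when it applies, confines $B$ to a $1$-dimensional rather than a $2$-dimensional subspace, but this gives no extra leverage here.
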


\begin{proof}
We only consider in detail the case $t=4$. Our arguments work, mutatis mutandis, also for the case $t=5$.
First, let $p^a \neq 2.$ The permutation group
$PSp(\frac{2d}{a},p^a)$ on the points of the associated projective
space is a rank $3$ group, and the orbits of the one-point
stabilizer are known
(e.g.~\cite[Ch.\,II,\,Thm.\,9.15\,(b)]{HupI1967}). Thus, $G_0 \unrhd
Sp(\frac{2d}{a},p^a)$ has exactly two orbits on \mbox{$V \setminus
\text{\footnotesize{$\langle$}} x \text{\footnotesize{$\rangle$}}$}
$(0 \neq x \in V)$ of length at least
\[\frac{p^a(p^{2d-2a}-1)}{p^a-1}=\sum_{i=1}^{\frac{2d}{a}-2}p^{ia} >
p^d.\] Let $S=\{0,x,y,z\}$ be a \mbox{$4$-subset} with $y,z \in
\text{\footnotesize{$\langle$}} x
\text{\footnotesize{$\rangle$}}$. If the unique block which is
incident with $S$ contains at least one point of \mbox{$V
\setminus \text{\footnotesize{$\langle$}} x
\text{\footnotesize{$\rangle$}}$}, then $k
> p^d+4$, a contradiction to Corollary~\ref{Cameron_t=5}. Therefore, we may continue
with our argumentation as in Proposition~\ref{SL}.

Considering the case $p^a=2$, let $v=2^{2d} > k> 4$. For $d=2$ (here $Sp(4,2) \cong S_6$ as
well-known), Corollary~\ref{Cameron_t=5} implies that $k=5$ or $6$, each of which is not possible in view of Corollary~\ref{Comb_t=5}~(c).
Therefore, let $d>2$. It is easily seen that there are $2^{2d-1}(2^{2d} -
1)$ hyperbolic pairs in the non-degenerate symplectic space
$V=V(2d,2)$, and by Witt's theorem, $Sp(2d,2)$ is transitive on
these hyperbolic pairs. Let $\{x,y\}$ denote a hyperbolic pair, and
$\mathcal{E}=\text{\footnotesize{$\langle$}} x,y
\text{\footnotesize{$\rangle$}}$ the hyperbolic plane spanned by
$\{x,y\}$. As $\mathcal{E}$ is non-degenerate, we have the
orthogonal decomposition
\[V=\mathcal{E}\perp\mathcal{E}^\perp.\]
Clearly, $Sp(2d,2)_{\{x,y\}}$ stabilizes $\mathcal{E}^\perp$ as a
subspace, which implies that \linebreak \mbox{$Sp(2d,2)_{\{x,y\}} \cong
Sp(2d-2,2)$}. As $\Out(Sp(2d,2))=1$, we have therefore
\[Sp(2d-2,2) \cong Sp(2d,2)_{\{x,y\}} \unlhd Sp(2d,2)_\mathcal{E}=G_{0,\mathcal{E}}.\]
As $Sp(2d-2,2)$ acts transitively on the non-zero vectors of the
\mbox{$(2d-2)$}-dimensional symplectic subspace, it is easy to see
that the smallest orbit on \mbox{$V \setminus \mathcal{E}$} under
$G_{0,\mathcal{E}}$ has length at least $2^{2d-2}-1$. If the unique
block $B \in \B$ which is incident with the \mbox{$4$-subset}
\mbox{$\{0,x,y,x+y\}$} contains some point in \mbox{$V \setminus
\mathcal{E}$}, then $k \geq 2^{2d-2}+3$, which is impossible in view of
Corollary~\ref{Cameron_t=5}. Thus, $B$ can be
identified with $\mathcal{E}$, leading again to a contradiction.
\end{proof}

\begin{prop}
There is no non-trivial Steiner \mbox{$4$-design} or \mbox{$5$-design} $\mathcal{D}$ admitting a
block-transitive group \mbox{$G \leq \emph{Aut}(\mathcal{D})$} of affine type, where $G_0 \unrhd G_2(2^a)'$, $d=6a$.
\end{prop}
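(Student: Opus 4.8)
The plan is to follow exactly the same strategy that succeeded for the $SL$ and $Sp$ cases: locate inside $G_0 \unrhd G_2(2^a)'$ a subgroup that stabilizes a small subspace $\mathcal{E}$ of $V = V(6a,2)$ and acts with large orbits on $V \setminus \mathcal{E}$, then argue that the block through a suitable $t$-subset of $\mathcal{E}$ cannot reach outside $\mathcal{E}$ without violating Corollary~\ref{Cameron_t=5}. First I would record the orbit structure of $G_2(2^a)$ (equivalently $Sp(6,2^a)$, since $G_2(q) \leq Sp(6,q)$ for $q$ even, acting on the natural $6$-dimensional module) on the non-zero vectors and on the associated projective space: the relevant point stabilizers have orbits on $V \setminus \langle x \rangle$ whose lengths are bounded below by a quantity of order $q^{2d/a - \text{const}}$, in fact at least on the order of $p^{4a} = 2^{4a}$, which comfortably exceeds $p^d = 2^{6a}$ divided by any small constant — more precisely, I would show the smallest such orbit has length $\geq 2^{4a}-1$ or similar. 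The key point to extract from the literature on $G_2(q)$ modules (for instance via the rank-$3$ action of $Sp(6,q)$ on singular points, or the explicit subgroup structure in \cite{HupI1967} and standard references on exceptional groups) is that some stabilizer $G_{0,\mathcal{E}}$ of a $2$- or $3$-dimensional subspace $\mathcal{E}$ contains a large classical or exceptional subgroup acting transitively (or with few, large orbits) on the complement.

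Concretely, for $t=4$ I would take $\mathcal{E}$ to be a suitable $2$-dimensional non-degenerate (hyperbolic) subspace, exactly as in the symplectic case, using $G_2(2^a)' \leq Sp(6,2^a)$ and the orthogonal decomposition $V = \mathcal{E} \perp \mathcal{E}^\perp$; the stabilizer then induces on $\mathcal{E}^\perp$ a group containing $Sp(4,2^a)$ (or the relevant $G_2$-analogue of a Levi-type subgroup), which acts transitively on non-zero vectors of $\mathcal{E}^\perp$, forcing the smallest orbit on $V \setminus \mathcal{E}$ to have length at least $2^{4a}-1$. Taking the $4$-subset $\{0,x,y,x+y\}$ inside $\mathcal{E}$ and invoking block-transitivity, the unique block $B$ incident with this $4$-subset is fixed by that large stabilizer, so either $B \subseteq \mathcal{E}$ — whence $k = 4$ after running the block-transitivity argument as in Proposition~\ref{SL} — or $k \geq 2^{4a}+3$, contradicting the bound $k \leq \lfloor \sqrt{v - 11/4} + 5/2 \rfloor$ from Corollary~\ref{Cameron_t=5} since $v = 2^{6a}$. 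For $t=5$ the same scheme applies mutatis mutandis: replace $\mathcal{E}$ by a non-degenerate subspace of dimension $\geq 3$, or enlarge the fixed $4$-subset to a $5$-subset $\{0,x,y,x+y,z\}$, and conclude that every block lies in a small affine subspace, which is impossible because Definition~\ref{Stdes} forces a block through any $5$ points spanning a larger affine subspace. The small exceptional cases (the smallest $a$, where $G_2(2^a)'$ might fail to contain the large subgroup, or where $v$ is small enough that Corollary~\ref{Cameron_t=5} needs to be supplemented by the divisibility conditions of Corollary~\ref{Comb_t=5_divCond} and Corollary~\ref{Comb_t=5}~(c), and possibly $a=1$ where $G_2(2)' \cong PSU(3,3)$) I would dispatch by hand, as was done for $Sp(4,2)$ above.

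The main obstacle I anticipate is pinning down the exact orbit lengths of $G_2(2^a)'$ — rather than the full $Sp(6,2^a)$ — on vectors and on complements of small subspaces, since $G_2$ is not itself a rank-$3$ permutation group on the relevant modules in as clean a way as $PSp$; I may need to work inside $Sp(6,2^a)$ (using $G_2(2^a)' \leq G_2(2^a) \leq Sp(6,2^a)$) and merely verify that the containment $G_0 \unrhd G_2(2^a)'$, together with the $Sp$-orbit bounds, still yields orbits on $V \setminus \mathcal{E}$ that are large enough. Since the required inequality $2^{4a}+3 > \lfloor \sqrt{2^{6a} - 11/4} + 5/2 \rfloor$ holds with an enormous margin for every $a \geq 1$, even a crude lower bound on the orbit lengths (say, length $> 2^{3a}$, which follows from transitivity of $G_2(2^a)'$ itself on non-zero vectors of $V$, a classical fact) will be more than sufficient, so I expect the argument to go through once the orbit-size bookkeeping is stated carefully.
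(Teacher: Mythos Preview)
Your overall strategy is right in spirit, but there is a genuine gap at the crucial step. The claim that the stabilizer in $G_2(2^a)'$ of a hyperbolic plane $\mathcal{E}$ induces a group containing $Sp(4,2^a)$ on $\mathcal{E}^\perp$ is false: $G_2(q)$ is a proper subgroup of $Sp(6,q)$, and its parabolic-type stabilizers have Levi factors of type $A_1$, not $C_2$. So you cannot simply import the symplectic argument. More seriously, your fallback (``even a crude lower bound \dots\ length $> 2^{3a}$ \dots\ will be more than sufficient'') fails for exactly the reason the paper has to work harder here: for $0 \neq x \in V$ the one-point stabilizer $G_2(2^a)_x$ has three orbits on $V \setminus \langle x \rangle$ of lengths $2^{3a}-2^a$, $2^{5a}-2^{3a}$, $2^{6a}-2^{5a}$, and the smallest one is \emph{below} the threshold $\sqrt{v}\approx 2^{3a}$ coming from Corollary~\ref{Cameron_t=5}. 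Thus a block through a $4$-subset of $\langle x \rangle$ that picks up this short orbit $\mathcal{O}_1$ would have $k \geq 2^{3a}-2^a+4$, which does \emph{not} violate Corollary~\ref{Cameron_t=5}.

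The paper handles this by using the explicit description $\mathcal{O}_1 = x\Delta \setminus \langle x\rangle$, where $x\Delta = \{y \in V : f(x,y,z)=0 \text{ for all } z\}$ for the $G_2$-invariant alternating trilinear form $f$ (cf.\ Aschbacher). One then picks $\overline{x}$ with $\langle \overline{x}\rangle \subseteq x\Delta$, $\langle \overline{x}\rangle \neq \langle x\rangle$, observes by symmetry that the $4$-subset $\{0,\overline{x},\overline{y},\overline{z}\}\subseteq \langle \overline{x}\rangle$ must lie in the same block $B$, and derives a contradiction from $\overline{x}\Delta \neq x\Delta$. This extra structural step is precisely what your proposal is missing; without it the $\mathcal{O}_1$ case cannot be eliminated. (The $a=1$ case, where $\langle x\rangle$ is too small to hold four points anyway, is indeed done by hand via Corollaries~\ref{Comb_t=5_divCond} and~\ref{Cameron_t=5} and Remark~\ref{equa_t=5}, as you anticipated.)
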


\begin{proof}
Let $t=4$. For $t=5$, we may argue mutatis mutandis.
First, let $a=1$. Then $v=2^6=64$, and so $k \leq 10$ by Corollary~\ref{Cameron_t=5}.
We have $\left| G_2(2)' \right|=2^5 \cdot 3^3 \cdot 7$ and $\left| \Out(G_2(2)') \right| =2$.
Using Corollary~\ref{Comb_t=5_divCond} and Remark~\ref{equa_t=5}, we can easily rule out the possibilities for $k$.
Now, let $a>1$. As here $G_2(2^a)$ is simple non-Abelian, it is
sufficient to consider $G_0 \unrhd G_2(2^a)$. The permutation group
$G_2(2^a)$ is of rank $4$, and for $0 \neq x \in V$, the one-point
stabilizer $G_2(2^a)_x$ has exactly three orbits $\mathcal{O}_i$
$(i=1,2,3)$ on \mbox{$V \setminus \text{\footnotesize{$\langle$}} x
\text{\footnotesize{$\rangle$}}$} of length
$2^{3a}-2^a,2^{5a}-2^{3a},2^{6a}-2^{5a}$ (cf., e.g.,~\cite{Asch1987}
or~\cite[Thm.\,3.1]{CaKa1979}). Thus, $G_0$ has exactly three orbits
on \mbox{$V \setminus \text{\footnotesize{$\langle$}} x
\text{\footnotesize{$\rangle$}}$} of length at least $\left|
\mathcal{O}_i \right|.$ Let $S=\{0,x,y,z\}$ be a \mbox{$4$-subset}
with $y,z \in \text{\footnotesize{$\langle$}} x
\text{\footnotesize{$\rangle$}}$. Again, we will show that the
unique block $B \in \B$ which is incident with $S$ lies completely
in $\text{\footnotesize{$\langle$}}x
\text{\footnotesize{$\rangle$}}$. If $B$ contains at least one point
of \mbox{$V \setminus \text{\footnotesize{$\langle$}} x
\text{\footnotesize{$\rangle$}}$} in $\mathcal{O}_2$ or
$\mathcal{O}_3$, then we obtain again a contradiction to
Corollary~\ref{Cameron_t=5}. Thus, we only have to consider the case
when $B$ contains points of \mbox{$V \setminus
\text{\footnotesize{$\langle$}} x \text{\footnotesize{$\rangle$}}$}
which all lie in $\mathcal{O}_1$. By~\cite{Asch1987}, the orbit
$\mathcal{O}_1$ is exactly known, and we have
\[\mathcal{O}_1 = x\Delta \setminus
\text{\footnotesize{$\langle$}} x \text{\footnotesize{$\rangle$}},\]
where $x\Delta = \{y \in V \mid f(x,y,z)=0 \;\,\mbox{for all} \;\, z
\in V\}$ with an alternating trilinear form $f$ on $V$. Then $B$
consists, apart from elements of $\text{\footnotesize{$\langle$}} x
\text{\footnotesize{$\rangle$}}$, exactly of $\mathcal{O}_1$. Since
$\left| \mathcal{O}_1 \right| \neq 1$, we can choose
$\text{\footnotesize{$\langle$}} \overline{x}
\text{\footnotesize{$\rangle$}} \in x\Delta$ with
$\text{\footnotesize{$\langle$}} \overline{x}
\text{\footnotesize{$\rangle$}} \neq \text{\footnotesize{$\langle$}}
x \text{\footnotesize{$\rangle$}}$. However, for symmetric reasons,
the \mbox{$4$-subset} $\{0,\overline{x},\overline{y},\overline{z}\}$
with $\overline{y},\overline{z} \in \text{\footnotesize{$\langle$}}
\overline{x} \text{\footnotesize{$\rangle$}}$ must also be incident
with the unique block $B$, a contradiction to the fact that
$\overline{x} \Delta \neq x \Delta$ for
$\text{\footnotesize{$\langle$}} \overline{x}
\text{\footnotesize{$\rangle$}} \neq \text{\footnotesize{$\langle$}}
x \text{\footnotesize{$\rangle$}}$. Consequently, $B$ is completely contained
in $\text{\footnotesize{$\langle$}} x
\text{\footnotesize{$\rangle$}}$, and we may argue as in the Propositions
above.
\end{proof}

\begin{prop}
There is no non-trivial Steiner \mbox{$4$-design} or \mbox{$5$-design} $\mathcal{D}$ admitting a
block-transitive group \mbox{$G \leq \emph{Aut}(\mathcal{D})$} of affine type, where $G_0$ is as in Cases~$(5)-(8)$.
\end{prop}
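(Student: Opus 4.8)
\emph{Approach.}
The plan is to dispose of all four remaining possibilities (5)--(8) at once by an arithmetic argument, exploiting that in each of them $v$ is one of only finitely many explicitly listed values: $v=2^{4}$ in (5); $v\in\{5^{2},7^{2},11^{2},19^{2},23^{2},29^{2},59^{2}\}$ or $v=3^{4}$ in (6); $v=3^{4}$ in (7); and $v=3^{6}$ in (8). As in the preceding propositions, block-transitivity forces point-transitivity, so by Remark~\ref{equa_t=5} the number of blocks is $b=\binom{v}{t}/\binom{k}{t}=v\,|G_{x}|/|G_{B}|=|G|/|G_{B}|$; in particular $b$ is a positive integer dividing $|G|$. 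Writing $G=T\rtimes G_{0}$ with $|T|=v$, we have $|G|=v\,|G_{0}|$, and in each case the structure of $G_{0}$ restricts the set of primes dividing $|G_{0}|$ (hence $|G|$) to a small explicit set: for (5), $|G_{0}|\in\{|A_{6}|,|A_{7}|\}$ involves only $\{2,3,5\}$ or $\{2,3,5,7\}$; for (6), $G_{0}\leq GL(2,p)$ (respectively $GL(4,3)$), so the primes of $|G_{0}|$ divide $p(p^{2}-1)$ (respectively $2\cdot 3\cdot 5\cdot 13$); for (7), $|G_{0}|$ divides $2^{5}\cdot 5!$ and so involves only $\{2,3,5\}$; and for (8), $|G_{0}|=|SL(2,13)|=2^{3}\cdot 3\cdot 7\cdot 13$.

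\emph{Main argument.}
First I would bound the block size: Corollary~\ref{Cameron_t=5} gives $k\leq k_{0}:=\bigl\lfloor\sqrt{v-(\tfrac{11}{4}+i)}+\tfrac{5}{2}+i\bigr\rfloor$ for $t=4+i$ ($i=0,1$), which is an explicit integer of size about $\sqrt{v}$ for each admissible $v$. Next, the key observation is that for each of these $v$ one of the consecutive integers $v-1,v-2,\dots,v-t+1$ has a prime divisor $p^{*}$ with $p^{*}>k_{0}$. Since $p^{*}>k_{0}\geq k>t$, the prime $p^{*}$ divides $(v-1)\cdots(v-t+1)$ but not $t!$, hence $p^{*}\mid\binom{v}{t}$; and since $p^{*}>k$ it divides none of $k,k-1,\dots,k-t+1$, hence $p^{*}\nmid\binom{k}{t}$. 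As $b=\binom{v}{t}/\binom{k}{t}$ is an integer, it follows that $p^{*}\mid b$. Combining this with $b\mid|G|=v\,|G_{0}|$ yields $p^{*}\mid v\,|G_{0}|$; but $p^{*}$ divides $v-j$ for some $j$ with $1\leq j\leq t-1<p^{*}$, so $p^{*}\nmid v$ (otherwise $p^{*}\mid\gcd(v,v-j)\mid j$), which forces $p^{*}\mid|G_{0}|$ --- contrary to the prime lists above. This reasoning applies verbatim to $t=4$ and to $t=5$, as $t$ enters only through the inequality $p^{*}>k_{0}\geq k>t$.

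\emph{Main obstacle.}
The only part that is not purely formal is the finite verification that a prime $p^{*}>k_{0}$ of the required kind exists for each of the (at most ten) values of $v$; this amounts to a routine inspection of the factorizations of $v-1,v-2,v-3$ and, for $t=5$, of $v-4$. For instance, $v-2$ is prime for $v\in\{3^{4},3^{6},5^{2},7^{2},19^{2},29^{2}\}$, giving $p^{*}=79,727,23,47,359,839$ respectively; for $v=2^{4}$ one takes $p^{*}=13=v-3$; for $v=11^{2}$ one has $v-3=2\cdot 59$, so $p^{*}=59$; for $v=23^{2}$ one has $v-3=2\cdot 263$, so $p^{*}=263$; and for $v=59^{2}=3481$ one has $v-2=7^{2}\cdot 71$, so $p^{*}=71$. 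In every instance $p^{*}$ exceeds $k_{0}$ and lies outside the relevant small prime set; the tightest case is $v=59^{2}$, where $k_{0}\leq 62$ and $p^{*}=71$ clears the bar only narrowly, but it does clear it. (Alternatively, for the finitely many pairs $(v,k)$ with $t<k\leq k_{0}$ surviving Corollary~\ref{Comb_t=5_divCond}, one could derive a contradiction directly from Remark~\ref{equa_t=5} together with Corollary~\ref{Comb_t=5}~(c), exactly as in Proposition~\ref{AGL(d,2)} and the propositions following it.)
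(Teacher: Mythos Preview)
Your argument is correct. The paper's own proof is a one-sentence appeal to a finite hand check (``combine Corollaries~\ref{Comb_t=5}, \ref{Comb_t=5_divCond}, \ref{Cameron_t=5} and Remark~\ref{equa_t=5}''), and your proposal carries this out in the same spirit but with a tidier mechanism: rather than testing each admissible $k\le k_0$ against the integrality conditions, you exhibit for every relevant $v$ a single prime $p^{*}$ dividing some $v-j$ ($1\le j\le t-1$) with $p^{*}>k_0$ and $p^{*}\nmid |G_0|$, which via $b\mid |G|$ kills all $k$ at once. This is a modestly more elegant packaging of exactly the same ingredients; the only cost is the short table of factorizations you supply, and the parenthetical fallback you mention (direct elimination via Corollaries~\ref{Comb_t=5_divCond} and \ref{Comb_t=5}\,(c)) is precisely what the paper has in mind.
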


\begin{proof}
We have in these cases only finitely many possibilities for $k$ to check,
which can easily be ruled out by hand, combining Corollaries~\ref{Comb_t=5},~\ref{Comb_t=5_divCond},~\ref{Cameron_t=5},
and Remark~\ref{equa_t=5}.
\end{proof}

\medskip


\subsection*{Acknowledgment}\hfill

\smallskip

The author gratefully acknowledges support by the Deutsche Forschungsgemeinschaft (DFG) via a Heisenberg grant (Hu954/4).

\bibliographystyle{amsplain}
\bibliography{XbibDesAff}
\end{document}